\newtheorem{theorem}{Theorem}[subsection]
\newtheorem{proposition}[theorem]{Proposition}
\newtheorem{lemma}[theorem]{Lemma}
\newtheorem{example}[theorem]{Example}
\newtheorem{corollary}[theorem]{Corollary}
\newtheorem{definition}[theorem]{Definition}
\renewcommand{\subsection}{\@startsection{subsection}{1}
{0pt}{3.25ex plus 1ex minus.2ex}{-1em}{\normalfont\normalsize\bf}}
\begin{document}

\title{{\bf Limitedly L-weakly compact operators}}
\maketitle
\author{\centering{{Safak Alpay$^{1}$, Svetlana Gorokhova $^{2}$, Eduard Emelyanov$^{3}$\\ 
\small $1$ Middle East Technical University\\ 
\small $2$ Uznyj matematiceskij institut VNC RAN\\
\small $3$ Sobolev Institute of Mathematics}

\abstract{We introduce new class of limitedly L-weakly compact operators from a Banach space
to a Banach lattice. This class is a proper subclass of the Bourgain-Diestel operators and it contains
properly the class of L-weakly compact operators. We give its efficient characterization 
in term of sequences, investigate the domination problem, and study the completeness 
of this class of operators.}

\vspace{5mm}
%$*$Corresponding Author}%%\abstract{.}\\ 
\noindent
{\bf Keywords:} Banach lattice, \text{\rm Lwc}-set, limited set, limitedly L-weakly compact operator.

\noindent
{\bf MSC2020:} {\normalsize 46A40, 46B42, 46B50, 47B65}

}}
\bigskip
\bigskip

%%%%%%%%%%%%%%%%%%%%
\section{Introduction and Preliminaries}
%%%%%%%%%%%%%%%%%%%%

The L-weakly compact operators were introduced by P. Meyer-Nieberg in the beginning of seventies in 
order to diversify the concept of weakly compact operators via imposing Banach lattice structure 
on the range of operators. Limited operators and the Bourgain-Diestel operators were introduced 
a decade later. Since then, these operators attract permanent attention and 
inspire many researchers. In the last decade further related classes were introduced 
and studied by many authors \cite{BLM1,EAS,BLM3,BLM2}. 
Applying the Meyer-Nieberg approach to the Bourgain-Diestel operators, 
we introduce new classes of limitedly and bi-limitedly \text{\rm L}-weakly
compact operators, give their sequential characterization, and study the domination 
problem in these classes of operators.

In the present note: vector spaces are real; operators are linear and bounded; the letters $X$ and $Y$ 
stand for Banach spaces, $E$ and $F$ for Banach lattices; $B_X$ denotes the closed unit ball of $X$;
$\text{\rm L}(X,Y)$ the space of all bounded operators from $X$ to $Y$; 
$E_+$ the positive cone of $E$;
$$
  \text{\rm sol}(A)=\bigcup\limits_{a\in A}[-|a|,|a|]
$$
the solid hull of $A\subseteq E$; 
and $E^a=\{x\in E: |x|\ge x_n\downarrow 0\Rightarrow\|x_n\|\to 0\}$ 
the \text{\rm o}-continuous part of $E$. 
For further unexplained terminology on Banach lattices used in this note, we refer the reader to \cite{AlBu,AEG2,Kus,Mey}.

%The note is organized as follows. 
In Section 2, we introduce limitedly and bi-limitedly \text{\rm L}-weakly
compact operators and give a sequential characterization of such operators.
Section 3 is devoted to the domination problem in the class of limitedly \text{\rm L}-weakly
compact operators, and to the completeness of the spaces of such operators.

\section{Main Definitions and Basic Results}

Here we collect crucial definitions and prove the main technical theorem of the present note.

\subsection{}
The following definition is due to P. Meyer-Nieberg \cite{Mey}.

\begin{definition}\label{LWC-subsets}
{\em A subset $A$ of $F$ is an \text{\rm Lwc}-{\em set} whenever
each disjoint sequence $(a_n)$ in $\text{\rm sol}(A)$ is norm-null.
A bounded operator $T:X\to F$ is an \text{\rm Lwc}-{\em operator}
($T\in\text{\rm Lwc}(X,F)$)  if $T(B_X)$ is an \text{\rm Lwc}-subset of $F$.}
\end{definition}
%\noindent
%P. Meyer-Nieberg proved that each \text{\rm Lwc}-set is relatively weakly compact.

\smallskip
\noindent
As war as we know, the first part of the following definition goes back to J. Diestel.

\begin{definition}\label{bi-limited set}
{\em
A bounded subset $A$ of $X$ is called:
\begin{enumerate}[a)]
\item 
{\em limited} if each \text{\rm w}$^\ast$-null sequence in $X'$ is 
uniformly null on $A$. 
\item 
\text{\rm bi}-{\em limited} if each \text{\rm w}$^\ast$-null sequence in $X'''$ is 
uniformly null on $A$. 
\end{enumerate}}
\end{definition}
\noindent
It is well known that $B_X$ is not limited in $X$ when $\dim(X)=\infty$, and 
$$
   A \ \text{\rm is relatively compact} \ \Longrightarrow A \ 
   \text{\rm is limited} \ \Longrightarrow A \ \text{\rm is bi-limited}. 
$$
Each \text{\rm bi}-limited subset of a reflexive Banach space is relatively compact.
$B_{c_0}$ is not limited, yet is \text{\rm bi}-limited in $c_0$ by Phillip's lemma. 
The following definition goes back to G. Emmanuele \cite{Emma}.

\begin{definition}\label{BD property} 
{\em A Banach space $X$ is said to possess the {\em Bourgain--Diestel property} 
(briefly, $X\in(\text{\rm BDP})$)
if each limited subset of $X$ is relatively weakly compact.
An operator $T:X\to Y$ is a {\em Bourgain--Diestel operator} 
(briefly, $T\in\text{\rm BD}(X,Y)$) if $T$ carries 
limited sets onto relatively weakly compact sets.}
\end{definition}
\noindent
It was shown in \cite{BD} that all separable and all reflexive Banach spaces belong to (\text{\rm BDP}).
%A Dedekind $\sigma$-complete Banach lattice $E$ belongs $(\text{\rm GPP})$ 
%iff $E$ has $\text{\rm o}$-continuous norm \cite{Buh}.
In particular, $\ell^1,c_0,c\in(\text{\rm BDP})$, 
yet $\ell^\infty\not\in(\text{\rm BDP})$ as $B_{c_0}$ is a limited but not weakly 
compact subset of $\ell^\infty$.  

\subsection{}
Here, we present two new classes of operators.
The first one lies strictly between \text{\rm L}-weakly compact and Bourgain--Diestel operators.
It refines the class of Bourgain--Diestel operators
in the same manner as \text{\rm L}-weakly compact operators refine weakly compact operators.
%Details are given in the diagrams below. 

\bigskip
\begin{definition}\label{Main LWC operators}
{\em An operator $T:X\to F$ is called:
\begin{enumerate}[i)]
\item 
{\em limitedly \text{\rm L}-weakly compact} ($T\in\text{\rm l-Lwc}(X,F)$), 
if $T$ carries limited subsets of $X$ onto \text{\rm Lwc}-subsets of $F$.
\item 
\text{\rm bi}-{\em limitedly \text{\rm L}-weakly compact} ($T\in\text{\rm bi-l-Lwc}(X,F)$), 
if $T$ carries \text{\rm bi}-limited subsets of $X$ onto \text{\rm Lwc}-subsets of $F$.
\end{enumerate}}
\end{definition}
\noindent
Clearly, $\text{\rm l-Lwc}(X,F)$ and $\text{\rm bi-l-Lwc}(X,F)$ are vector spaces and
$$\text{\rm Lwc}(X,F)\subseteq\text{\rm bi-l-Lwc}(X,F)\subseteq
\text{\rm l-Lwc}(X,F)\subseteq\text{\rm BD}(X,F).$$ 
All three inclusions here are proper in general.

\begin{example}\label{bi-l-Lwc operator that is not Lwc}
{\em 
Consider operator $I_{\ell^2}$. 
Since each limited subset of $\ell^2$ is relatively compact and relatively compact 
subsets of $\ell^2$ are \text{\rm Lwc}-sets, then 
$I_{\ell^2}\in\text{\rm l-Lwc}(\ell^2)$ and hence $I_{\ell^2}\in\text{\rm bi-l-Lwc}(\ell^2)$
because $\ell^2$ is reflexive. Since $B_{\ell^2}$ is not an \text{\rm Lwc}-set then 
$I_{\ell^2}\notin\text{\rm Lwc}(\ell^2)$.
}
\end{example}

\begin{example}\label{l-Lwc operator that is not bi-l-Lwc}
{\em 
Consider operator $I_{c_0}$. 
Since each limited subset of $c_0$ is relatively compact and relatively compact 
subsets of $c_0$ are \text{\rm Lwc}-sets, then 
$I_{c_0}\in\text{\rm l-Lwc}(c_0)$.
Since $B_{c_0}$ is a \text{\rm bi}-limited subset of $c_0$ which is not
an \text{\rm Lwc}-set then $I_{c_0}\notin\text{\rm bi-l-Lwc}(c_0)$.
}
\end{example}

\begin{example}\label{BD operator that is not l-Lwc}
{\em 
Consider $I_{c}$. Since $c$ is separable, limited subsets of $c$ coincide 
with relatively compact subsets, 
and hence $I_{c}\in\text{\rm BD}(c)$.
However, the singleton $\{(1,1,1,...)\}$ is limited
yet not an \text{\rm Lwc}-set in $c$. Thus 
$I_{c}\notin\text{\rm l-Lwc}(c)$.
}
\end{example}
\noindent
Combining the above examples we obtain: 
$$
   \text{\rm Lwc}(\ell^2\oplus c_0\oplus c)\subsetneqq
   \text{\rm bi-l-Lwc}(\ell^2\oplus c_0\oplus c)\subsetneqq
   \text{\rm l-Lwc}(\ell^2\oplus c_0\oplus c)\subsetneqq
   \text{\rm BD}(\ell^2\oplus c_0\oplus c).
$$

\subsection{}
The following sequential characterization of \text{\rm l-Lwc}-operators
restates the definition of \text{\rm l-Lwc}-operators in an applicable way, 
with no use of both limited and \text{\rm l-Lwc}-sets.

\begin{theorem}\label{l-LW-operators}
$T\in\text{\rm l-Lwc}(X,F)$ iff 
$T'f_n\stackrel{\text{\rm w}^\ast}{\to}0$ in $X'$ for each disjoint bounded
sequence $(f_n)$ in $F'$.
\end{theorem}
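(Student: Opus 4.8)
The plan is to pass to the dual, sequential description of Lwc-sets and then invoke the duality between limited sets and $\text{\rm w}^\ast$-null sequences; the whole argument will be a short chain of equivalences. First I would recall Meyer-Nieberg's characterization (\cite{Mey}): a bounded set $B\subseteq F$ is an \text{\rm Lwc}-set if and only if $\sup_{y\in B}|f_n(y)|\to 0$ for every disjoint bounded sequence $(f_n)$ in $F'$. Applying this to $B=T(A)$ and writing $f_n(Tx)=(T'f_n)(x)$, the requirement that $T(A)$ be an \text{\rm Lwc}-set becomes $\sup_{x\in A}|(T'f_n)(x)|\to 0$ for every disjoint bounded $(f_n)$ in $F'$. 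Consequently $T\in\text{\rm l-Lwc}(X,F)$ holds exactly when this is valid simultaneously for every limited subset $A$ of $X$.

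Next I would note that the two universal quantifiers, over limited sets $A$ and over disjoint bounded sequences $(f_n)$, commute, so it is enough to treat a fixed disjoint bounded sequence $(f_n)$ in $F'$ and to establish the equivalence
$$
  \sup_{x\in A}|(T'f_n)(x)|\to 0 \ \text{ for every limited } A\subseteq X
  \quad\Longleftrightarrow\quad T'f_n\stackrel{\text{\rm w}^\ast}{\to}0 .
$$
Since $(f_n)$ and $T'$ are bounded, $(T'f_n)$ is a bounded sequence in $X'$, and the equivalence is an instance of the following elementary fact, which I would isolate as a lemma: for a bounded sequence $(g_n)$ in $X'$ one has $\sup_{x\in A}|g_n(x)|\to 0$ for every limited $A\subseteq X$ if and only if $g_n\stackrel{\text{\rm w}^\ast}{\to}0$. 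The forward direction uses that every singleton is relatively compact, hence limited, so $|g_n(x)|\to 0$ for each $x\in X$, which is precisely $\text{\rm w}^\ast$-nullity; the reverse direction is immediate from the definition of a limited set, on which $\text{\rm w}^\ast$-null sequences are uniformly null. Chaining these equivalences with the reduction of the first paragraph yields the theorem.

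The routine parts, rewriting through the adjoint and commuting the universal quantifiers, are harmless, and the lemma is soft. The only step carrying genuine content is Meyer-Nieberg's dual characterization of \text{\rm Lwc}-sets, so that is where I expect the main obstacle to sit if one does not simply cite \cite{Mey}: its nontrivial direction requires manufacturing, from a disjoint sequence in $\text{\rm sol}(T(A))$ that is bounded away from $0$ in norm, a disjoint bounded sequence of functionals in $F'$ witnessing the failure of $\sup_{x\in A}|(T'f_n)(x)|\to 0$. This disjointification in $F'$ is the technical heart, and I would borrow it wholesale from \cite{Mey} rather than reprove it.
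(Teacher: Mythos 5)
Your proposal is correct and follows essentially the same route as the paper: the paper's Proposition~\ref{Burkinshaw--Dodds} (applied with $B=B_{F'}$) is exactly the Meyer--Nieberg dual characterization of \text{\rm Lwc}-sets you invoke, and both arguments then combine it with the facts that singletons are limited (for necessity) and that \text{\rm w}$^\ast$-null sequences are uniformly null on limited sets by definition (for sufficiency). The only difference is presentational — you organize it as a chain of equivalences after commuting quantifiers, while the paper argues the sufficiency direction by contradiction — so no further comment is needed.
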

\noindent
The proof is based on the next important fact due to Meyer-Nieberg
(see \cite[Thm.5.63]{AlBu} and \cite[Prop.2.2]{BuDo} for a slightly more general setting).

\begin{proposition}\label{Burkinshaw--Dodds}
Let $A\subseteq E$ and $B\subseteq E'$ be nonempty bounded sets. Then
every disjoint sequence $(a_n)$ of $\text{\rm sol}(A)$ is uniformly null
on $B$ iff every disjoint sequence $(b_n)$ of $\text{\rm sol}(B)$ is uniformly null on $A$.
\end{proposition}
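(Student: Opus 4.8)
The plan is to argue by contraposition, but first to remove the only real asymmetry in the statement, namely that $A$ and $B$ need not be solid. The observation that makes this possible is that \emph{enlarging a test set to its solid hull changes neither side}. Concretely, suppose every disjoint sequence of $\text{\rm sol}(A)$ is uniformly null on $B$, and let $(a_n)$ be disjoint in $\text{\rm sol}(A)$ with $|\langle a_n,w_n'\rangle|\ge\varepsilon$ for some $w_n'\in\text{\rm sol}(B)$. Picking $b_n\in B$ with $|w_n'|\le|b_n|$ gives $|b_n|(|a_n|)\ge\varepsilon$, and the Riesz--Kantorovich identity $|b_n|(|a_n|)=\sup\{|\langle w,b_n\rangle|:|w|\le|a_n|\}$ produces $w_n$ with $|w_n|\le|a_n|$ and $|\langle w_n,b_n\rangle|\ge\varepsilon/2$. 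As $(w_n)$ is again disjoint and sits in $\text{\rm sol}(A)$, it is not uniformly null on $B$, a contradiction. Hence for disjoint sequences of $\text{\rm sol}(A)$ uniform nullity on $B$ and on $\text{\rm sol}(B)$ agree, and symmetrically on the other side. Replacing $A$ by $\text{\rm sol}(A)$ and $B$ by $\text{\rm sol}(B)$, I may assume throughout that \emph{both $A$ and $B$ are solid}; the two conditions then read symmetrically under the interchange $(E,A)\leftrightarrow(E',B)$, the only difference being that $E'$ is automatically Dedekind complete while $E$ need not be.

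I treat the two implications separately. For ``$\Leftarrow$'' suppose some disjoint $(a_n)\subseteq A$ is not uniformly null on $B$, and fix $b_n\in B$ with $|\langle a_n,b_n\rangle|\ge\varepsilon$. Passing to moduli and using solidity, set $x_n:=|a_n|\in A$ (still disjoint) and $f_n:=|b_n|\in B$, so that $\langle x_n,f_n\rangle\ge\varepsilon$ and everything is positive. Because $E'$ is Dedekind complete, I may let $g_n\in E'$ be the band projection of $f_n$ onto the carrier band of $x_n$; then $0\le g_n\le f_n$, the $g_n$ are pairwise disjoint (their carriers lie in the pairwise disjoint bands generated by the $x_n$), and $\langle x_n,g_n\rangle=\langle x_n,f_n\rangle\ge\varepsilon$, since the complementary part $f_n-g_n$ annihilates the band containing $x_n$. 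By solidity $g_n\in B$, and $\sup_{a\in A}|\langle a,g_n\rangle|\ge\langle x_n,g_n\rangle\ge\varepsilon$, so $(g_n)$ is a disjoint sequence in $B$ that fails to be uniformly null on $A$. This is the easy direction.

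For ``$\Rightarrow$'' the roles are reversed and the Dedekind completeness of $E'$ is no longer available on the side where disjointness must be created. Suppose a disjoint sequence $(b_n)\subseteq B$ is not uniformly null on $A$, and fix $a_n\in A$ with $|\langle a_n,b_n\rangle|\ge\varepsilon$; passing to moduli as before, set $\beta_n:=|b_n|\in B$ (still disjoint) and $x_n:=|a_n|\in A$, so that $\langle x_n,\beta_n\rangle\ge\varepsilon$ --- but now the $x_n$ need not be disjoint. The task is to manufacture, inside $A$, a genuinely disjoint sequence that still sees the $\beta_n$. This is the content of the following disjointification lemma, which is the crux of the whole proof: \emph{given a bounded disjoint sequence $(\beta_n)$ in $E'_+$ and a bounded sequence $(x_n)$ in $E_+$ with $\langle x_n,\beta_n\rangle\ge\varepsilon$, there are a subsequence $(n_k)$ and a disjoint sequence $(u_k)$ in $E_+$ with $u_k\le x_{n_k}$ and $\langle u_k,\beta_{n_k}\rangle\ge\varepsilon/2$.} Granting it, solidity gives $u_k\in A$, the $u_k$ are disjoint, and $\sup_{b\in B}|\langle u_k,b\rangle|\ge\langle u_k,\beta_{n_k}\rangle\ge\varepsilon/2$, so $(u_k)$ is a disjoint sequence in $A$ that is not uniformly null on $B$, contradicting the hypothesis.

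The disjointification lemma is where I expect the real work to lie. Its mechanism is a gliding hump fed by the disjointness relations $\beta_i\wedge\beta_j=0$: through the Riesz--Kantorovich formula $(\beta_i\wedge\beta_j)(x)=\inf\{\langle u,\beta_i\rangle+\langle x-u,\beta_j\rangle:0\le u\le x\}=0$, and its finite version with $\beta_{i_1}\vee\cdots\vee\beta_{i_r}$ in place of $\beta_i$, one shaves each $x_{n_k}$ down to a piece $u_k\le x_{n_k}$ that retains almost all of the mass $\langle\cdot,\beta_{n_k}\rangle$ while being nearly invisible to the other selected functionals; the almost-disjoint pieces are then corrected to genuinely disjoint ones via $w_k=(u_k-\sum_{i\ne k}u_i)^+$, using the elementary identity $(v-w)^+\wedge(w-v)^+=0$. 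The delicate point is the simultaneous two-sided bookkeeping --- arranging that $\langle u_i,\beta_{n_k}\rangle$ is summably small both for the earlier and for the later indices $i$, which is obstructed by the fact that a bounded disjoint sequence in $E'$ need \emph{not} be $\text{\rm w}^\ast$-null --- and it is handled by a diagonal choice of the subsequence with summable error tolerances, exactly as in \cite[Thm.~5.63]{AlBu}. Once the lemma is in hand the rest is formal, completing both implications and hence the equivalence.
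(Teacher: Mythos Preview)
The paper does not actually prove this proposition at all: it quotes it from the literature (see \cite[Thm.~5.63]{AlBu} and \cite[Prop.~2.2]{BuDo}) and then uses it as a black box in the proof of Theorem~\ref{l-LW-operators}. Your proposal is a sketch of precisely the argument found in those references --- the reduction to solid test sets followed by a Riesz--Kantorovich gliding hump --- and you yourself defer to \cite[Thm.~5.63]{AlBu} for the key disjointification lemma. So there is no meaningful divergence of approach to report; you have outlined the standard proof that the paper cites.

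One minor remark: the asymmetry you draw between the two implications is somewhat overstated. The classical treatments handle both directions by the same mechanism, since the infimum formula $(\beta\wedge\gamma)(x)=\inf\{\beta(u)+\gamma(x-u):0\le u\le x\}$ that drives the disjointification works equally well with the roles of $E$ and $E'$ interchanged and does not need Dedekind completeness of the space in which the new disjoint sequence is built. Your carrier-band shortcut for ``$\Leftarrow$'' is correct, but it tacitly relies on the fact that disjoint elements of $E$ have disjoint carriers in $E'$; this is true, yet its proof is itself a Riesz--Kantorovich computation of the same flavor (one shows that the component of $f$ along $y$, namely $g(z)=\sup_n f(z\wedge ny)$, lies in $N_x$ whenever $x\perp y$), so the split into an ``easy'' and a ``hard'' direction buys less than it appears.
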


\begin{proof}(of Theorem~\ref{l-LW-operators}). 
($\Longrightarrow$) Let $T\in\text{\rm l-Lwc}(X,F)$, $(f_n)$ be disjoint bounded in $F'$,
and $x\in X$. As $\{Tx\}\subset F$ is an $\text{\rm Lwc}$-set, then
$T'f_n(x)=f_n(Tx)\to 0$ by Proposition~\ref{Burkinshaw--Dodds}. Since $x\in X$
is arbitrary, $(T'f_n)$ is $\text{\rm w}^\ast$-null.
($\Longleftarrow$) Let $T'f_n\stackrel{\text{\rm w}^\ast}{\to}0$ 
for each disjoint bounded $(f_n)$ in $F'$. Suppose $T\not\in\text{\rm l-Lwc}(X,F)$. 
So, there is a non-empty limited subset $L$ of $X$ such that $T(L)$ is not 
an $\text{\rm Lwc}$-set in $F$.
Then, by Proposition \ref{Burkinshaw--Dodds}, there exists a disjoint sequence
$(g_n)$ of $B_{F'}$ that is not uniformly null on $T(L)$.
Therefore, $(T'g_n)$ is not uniformly null on $L$ violating 
$T'g_n\stackrel{\text{\rm w}^\ast}{\to}0$ and the fact that $L$ is limited in $X$.
The obtained contradiction completes the proof.
\end{proof}

\section{Main Results}

In the present section we investigate: the domination problem 
in the class of \text{\rm l-Lwc}-operators; the completeness 
of the spaces of such operators in operator norm; and complete norms 
on the linear spans of such operators. We focus on \text{\rm l-Lwc}-operators,
as the results for \text{\rm bi-l-Lwc}-operators are basically similar.

\subsection{}
We begin with the following domination result. 

\begin{proposition}\label{l-LW-domination}
Let $0\le S\le T\in\text{\rm l-Lwc}(E,F)$ then $S\in\text{\rm l-Lwc}(E,F)$.
\end{proposition}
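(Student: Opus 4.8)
The plan is to reduce everything to the sequential criterion of Theorem~\ref{l-LW-operators} and then to exploit the fact that $\text{\rm w}^\ast$-convergence is tested one vector at a time, so that the order relation $0\le S\le T$ can be pushed through pointwise. Concretely, by Theorem~\ref{l-LW-operators} it suffices to prove that $S'f_n\stackrel{\text{\rm w}^\ast}{\to}0$ in $E'$ for an arbitrary disjoint bounded sequence $(f_n)$ in $F'$, knowing (again by Theorem~\ref{l-LW-operators}, applied to $T$) that $T'g_n\stackrel{\text{\rm w}^\ast}{\to}0$ for every disjoint bounded sequence $(g_n)$ in $F'$.

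First I would record the two elementary lattice facts that drive the argument. Since $F'$ is a Banach lattice, each modulus $|f_n|$ is defined; disjointness of $(f_n)$ means $|f_n|\wedge|f_m|=0$ for $n\ne m$, which is literally a statement about the moduli, so $(|f_n|)$ is again disjoint, and it is bounded because $\||f_n|\|=\|f_n\|$. Hence $(|f_n|)$ is itself a disjoint bounded sequence in $F'$, and the hypothesis on $T$ yields $T'|f_n|\stackrel{\text{\rm w}^\ast}{\to}0$ in $E'$.

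Next, I would fix $x\in E_+$ and combine the pointwise inequality $|f_n(Sx)|\le|f_n|(Sx)$ (valid because $Sx\ge 0$), the domination $0\le Sx\le Tx$, and positivity of $|f_n|$ to estimate
\[
  |f_n(Sx)|\le|f_n|(Sx)\le|f_n|(Tx)=(T'|f_n|)(x)\to 0 .
\]
Thus $(S'f_n)(x)=f_n(Sx)\to 0$ for every $x\in E_+$. For an arbitrary $x\in E$, writing $x=x^+-x^-$ and invoking linearity of $S'f_n$ gives $(S'f_n)(x)\to 0$ as well, that is, $S'f_n\stackrel{\text{\rm w}^\ast}{\to}0$. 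Theorem~\ref{l-LW-operators} then delivers $S\in\text{\rm l-Lwc}(E,F)$.

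The argument is short precisely because the criterion of Theorem~\ref{l-LW-operators} concerns $\text{\rm w}^\ast$- rather than norm-convergence, which is exactly what lets the order estimate be carried out separately at each $x\in E_+$. The only point that I expect to require care is the passage to the moduli: one must check that disjointness and boundedness are inherited by $(|f_n|)$ so that the hypothesis on $T$ genuinely applies to it. Once that is in place, the monotonicity step displayed above is routine and the domination follows with no further obstacle.
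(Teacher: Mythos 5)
Your proof is correct and follows essentially the same route as the paper: both pass to the disjoint bounded sequence $(|f_n|)$, apply Theorem~\ref{l-LW-operators} to $T$, and push the domination $0\le S\le T$ through the pointwise estimate $|f_n(Sx)|\le |f_n|(Sx)\le |f_n|(Tx)=(T'|f_n|)(x)$. The paper merely compresses your two-step reduction (positive $x$ first, then $x=x^+-x^-$) into the single inequality $|S'f_n(x)|\le S'|f_n|(|x|)\le T'|f_n|(|x|)$ valid for all $x\in E$.
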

\begin{proof}
{\small Let $(f_n)$ be disjoint bounded in $F'$. Then $(|f_n|)$ is also disjoint bounded,
and hence $T'|f_n|\stackrel{\text{\rm w}^\ast}{\to}0$ by Theorem \ref{l-LW-operators}.
It follows from
$$
   |S'f_n(x)|\le S'|f_n|(|x|)\le T'|f_n|(|x|)\to 0  \ \ \ \ (\forall x\in E)
$$
that $S'f_n\stackrel{\text{\rm w}^\ast}{\to}0$. Using Theorem \ref{l-LW-operators} again, we conclude $S\in\text{\rm l-Lwc}(E,F)$.}
\end{proof}

\subsection{}
The following useful fact is another application of Theorem \ref{l-LW-operators}. 

\begin{proposition}\label{l-LW-closed}
Let $\text{\rm l-Lwc}(X,F)\ni T_n\stackrel{\|\cdot\|}{\to} T$. 
Then $T\in\text{\rm l-Lwc}(X,F)$.
\end{proposition}
\begin{proof}
{\small Let $(f_n)$ be disjoint bounded in $F'$, and $x\in X$. By Theorem \ref{l-LW-operators},
we need to show $T'f_n(x)\to 0$. Let $\varepsilon>0$. Pick any $k\in\mathbb{N}$
with $\|T-T_k\|\le\varepsilon$. Since $T_k\in\text{\rm l-Lwc}(X,F)$  
then $|T_k'f_n(x)|\le\varepsilon$ for $n\ge n_0$. As $\varepsilon>0$ is arbitrary,
it follows from
$$
   |T'f_n(x)|\le|T'f_n(x)-T'_kf_n(x)|+|T_k'f_n(x)|\le 
$$
$$
   \|T'-T'_k\|\|f_n\|\|x\|+|T_k'f_n(x)|\le(\|f_n\|\|x\|+1)\varepsilon 
   \ \ \ \ (\forall n\ge n_0)
$$
that $T'f_n(x)\to 0$, as desired.}
\end{proof}

\begin{corollary}\label{l-LW-algebra}
$\text{\rm l-Lwc}(E)$ is a closed right ideal in $\text{\rm L}(E)$ 
$($and hence a subalgebra of $\text{\rm L}(E)$$)$, and it is unital iff 
$I_E$ is \text{\rm l-Lwc}.
\end{corollary}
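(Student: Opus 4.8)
The plan is to prove the three assertions of Corollary~\ref{l-LW-algebra} in turn, drawing on Proposition~\ref{l-LW-closed} for closedness and on the sequential characterization in Theorem~\ref{l-LW-operators} for the ideal property. First I would note that closedness of $\text{\rm l-Lwc}(E)$ in the operator norm is immediate from Proposition~\ref{l-LW-closed} (taken with $X=E$ and $F=E$), so this part requires no further work.

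Next I would establish the right-ideal property: if $T\in\text{\rm l-Lwc}(E)$ and $R\in\text{\rm L}(E)$, then $TR\in\text{\rm l-Lwc}(E)$. The cleanest route is through Theorem~\ref{l-LW-operators}. Let $(f_n)$ be a disjoint bounded sequence in $E'$. Since $T\in\text{\rm l-Lwc}(E)$, Theorem~\ref{l-LW-operators} gives $T'f_n\stackrel{\text{\rm w}^\ast}{\to}0$ in $E'$. Now $(TR)'f_n=R'T'f_n$, and $R'$ is $\text{\rm w}^\ast$-to-$\text{\rm w}^\ast$ continuous (being the adjoint of a bounded operator), so $R'T'f_n\stackrel{\text{\rm w}^\ast}{\to}0$. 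By the reverse direction of Theorem~\ref{l-LW-operators}, $TR\in\text{\rm l-Lwc}(E)$. This handles the right-ideal claim; being a (two-sided) right ideal that is also a linear subspace, $\text{\rm l-Lwc}(E)$ is in particular closed under composition from the right, hence a subalgebra of $\text{\rm L}(E)$. The parenthetical ``subalgebra'' assertion follows because a right ideal $J$ satisfies $JJ\subseteq JE\subseteq J$, so $J$ is multiplicatively closed, and it is already a vector space.

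Finally I would address the unitality statement. The algebra $\text{\rm l-Lwc}(E)$ is unital precisely when it contains a multiplicative identity. The candidate identity is of course $I_E$, the only possible two-sided unit for any subalgebra of $\text{\rm L}(E)$ that acts the standard way; if $I_E\in\text{\rm l-Lwc}(E)$ then for every $T\in\text{\rm l-Lwc}(E)$ we have $I_E T=T I_E=T$, so $I_E$ is the unit. Conversely, if $\text{\rm l-Lwc}(E)$ is unital with unit $U$, then for every $T\in\text{\rm l-Lwc}(E)$ one has $UT=TU=T$; since $\text{\rm l-Lwc}(E)$ is a right ideal containing $U$, it contains $UR=U R$ for all $R\in\text{\rm L}(E)$, and one checks that the unit of a right ideal of $\text{\rm L}(E)$ acting as identity on the ideal must coincide with $I_E$ on the whole range, forcing $U=I_E$ and hence $I_E\in\text{\rm l-Lwc}(E)$.

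The only delicate point is the converse direction of unitality: I expect the main obstacle to be pinning down rigorously why the abstract unit $U$ of the subalgebra must literally be $I_E$ rather than some other idempotent. The right-ideal structure is the key lever here, since for a right ideal $J\subseteq\text{\rm L}(E)$ a left unit $U$ for $J$ satisfies $U(RS)=RS$ whenever $RS\in J$; choosing $R$ to range over rank-one maps built from $U$ itself forces $U$ to act as the identity on all of $E$. I would make this precise by testing $U$ against suitable finite-rank operators in $\text{\rm l-Lwc}(E)$ (finite-rank operators are $\text{\rm Lwc}$, hence lie in the ideal), which will show $Ux=x$ for every $x\in E$ and thus $U=I_E$.
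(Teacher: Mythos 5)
Your treatment of the first two assertions is correct. Closedness is, as you say, exactly Proposition~\ref{l-LW-closed}. For the right-ideal property you route through Theorem~\ref{l-LW-operators}: $(TR)'f_n=R'T'f_n$ and $R'$ is $\text{\rm w}^\ast$-to-$\text{\rm w}^\ast$ continuous, so $T'f_n\stackrel{\text{\rm w}^\ast}{\to}0$ forces $(TR)'f_n\stackrel{\text{\rm w}^\ast}{\to}0$. That is a valid alternative to the paper's one-line argument, which works directly with sets: every bounded operator $R$ maps limited sets onto limited sets, so $(TR)(L)=T(R(L))$ is an $\text{\rm Lwc}$-set whenever $L$ is limited. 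The two arguments are of comparable length; the paper's has the small advantage of not needing the sequential characterization at all.

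The unitality clause is where your proposal has a genuine gap. Your plan for the converse is to test a putative unit $U$ against finite-rank operators, "which are $\text{\rm Lwc}$, hence lie in the ideal." This is false in a general Banach lattice: a rank-one operator $x\mapsto f(x)y$ is $\text{\rm Lwc}$ only when the order interval $[-|y|,|y|]$ is an $\text{\rm Lwc}$-set, i.e.\ only when $y$ lies in the $\text{\rm o}$-continuous part $E^a$. The paper's own Example~\ref{BD operator that is not l-Lwc} exhibits the failure: $\{(1,1,1,\dots)\}$ is not an $\text{\rm Lwc}$-set in $c$, so the rank-one operator $x\mapsto f(x)(1,1,1,\dots)$ on $c$ is neither $\text{\rm Lwc}$ nor $\text{\rm l-Lwc}$. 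Worse, the testing family you need may be empty: since every $\text{\rm Lwc}$-set lies in $E^a$, one has $\text{\rm l-Lwc}(E)=\{0\}$ whenever $E^a=\{0\}$ (e.g.\ $E=C[0,1]$), so no nonzero operators are available to pin down $U$, and with the "abstract unit" reading the claim would even be falsified by the zero algebra in that case. The intended (and trivially correct) reading, which is what the paper's proof takes, is that "unital" means the subalgebra contains the unit $I_E$ of $\text{\rm L}(E)$; with that convention the equivalence is a tautology and no argument about abstract idempotents is needed. You should either adopt that reading explicitly or, if you insist on the abstract-unit version, restrict to the nondegenerate case and find a separating family inside $\text{\rm l-Lwc}(E)$ --- which, as the $C[0,1]$ example shows, cannot be done in general.
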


\begin{proof}
{\small By Proposition \ref{l-LW-closed},
$\text{\rm l-Lwc}(E)$ is a closed subspace of $\text{\rm L}(E)$. 
Since each bounded operator maps limited sets onto limited sets, $\text{\rm l-Lwc}(E)$ 
is a right ideal in $\text{\rm L}(E)$.
The condition on $I_E$ making $\text{\rm l-Lwc}(E)$ unital is trivial.}
\end{proof}

\subsection{}
Let $\emptyset\ne{\cal P}\subseteq\text{\rm L}(E,F)$ be a set of operators 
(the set of ${\cal P}$-{\it operators}). An operator $T:E\to F$ is 
%a {\it regularly} ${\cal P}$-{\it operator} (= 
an \text{\rm r}-${\cal P}$-{\it operator}, if $T=T_1 - T_2$ where  $T_1$ and 
$T_2$ are positive ${\cal P}$-operators.
${\cal P}$-operators are said to satisfy the {\em domination property} if
$0\le S\le T\in {\cal P} \ \Longrightarrow  \  S\in {\cal P}$. 
We say that  $T\in\text{\rm L}(E,F)$ is 
${\cal P}$-{\em dominated} if $\pm T\le U\in{\cal P}$. 

\begin{proposition}\label{prop elem}
Let ${\cal P}\subseteq\text{\rm L}(E,F)$, ${\cal P}\pm{\cal P}\subseteq{\cal P}\ne\emptyset$, 
and $T\in\text{\rm L}(E,F)$.\\
\smallskip
i) $T \ \text{is an {\rm r}-}{\cal P}\text{-operator} \Longleftrightarrow T \ \text{is a}\  {\cal P}\text{-dominated} \  {\cal P}\text{-operator.} $ \\
\smallskip
ii) Assume the modulus $|T|$ of $T$ exists in $\text{\rm L}(E,F)$ and suppose that ${\cal P}$-operators satisfy 
the domination property. Then \\
\smallskip
$T \ \text{is an {\rm r}-}{\cal P}\text{-operator}  \Longleftrightarrow |T| \in {\cal P}$.
\end{proposition}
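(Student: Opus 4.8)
The plan is to treat part i) as a purely order-algebraic statement that uses nothing beyond the hypothesis ${\cal P}\pm{\cal P}\subseteq{\cal P}$, and to deduce part ii) from part i) together with the domination property and the elementary identities governing the modulus. Throughout, I would keep in mind that ${\cal P}$ is assumed closed only under sums and differences, not under arbitrary scalar multiples, so every claim of membership in ${\cal P}$ must be certified either by additive closure or by the domination property; in particular I would never invoke $\tfrac12(|T|\pm T)\in{\cal P}$.

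For part i), I would argue both implications directly. For $(\Rightarrow)$, write $T=T_1-T_2$ with $T_1,T_2\ge 0$ positive ${\cal P}$-operators. Then $T\in{\cal P}$ since ${\cal P}$ is closed under differences, so $T$ is a ${\cal P}$-operator; and $U:=T_1+T_2\in{\cal P}$ dominates $T$, because $T_1-T_2\le T_1+T_2$ and $-(T_1+T_2)\le T_1-T_2$ reduce to $0\le 2T_2$ and $0\le 2T_1$. For $(\Leftarrow)$, suppose $T\in{\cal P}$ and $-U\le T\le U$ for some $U\in{\cal P}$. First note $U\ge 0$ (from $-U\le U$), then set $T_1:=U$ and $T_2:=U-T$; both are positive, both lie in ${\cal P}$ (here additive closure is exactly what is needed for $T_2$), and $T_1-T_2=T$. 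This exhibits $T$ as an r-${\cal P}$-operator without any division.

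For part ii), existence of $|T|$ in $\text{\rm L}(E,F)$ supplies the positive and negative parts $T^{+},T^{-}$ with $T=T^{+}-T^{-}$, $|T|=T^{+}+T^{-}$, and $0\le T^{\pm}\le|T|$. For $(\Leftarrow)$, assume $|T|\in{\cal P}$: then $0\le T^{\pm}\le|T|\in{\cal P}$, so the domination property forces $T^{\pm}\in{\cal P}$, and $T=T^{+}-T^{-}$ is a difference of positive ${\cal P}$-operators, i.e.\ an r-${\cal P}$-operator. For $(\Rightarrow)$, write $T=T_1-T_2$ as in the definition; from $T\le T_1$, $0\le T_1$ one gets $T^{+}\le T_1$, and symmetrically $T^{-}\le T_2$, whence $0\le|T|=T^{+}+T^{-}\le T_1+T_2\in{\cal P}$. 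The domination property then yields $|T|\in{\cal P}$.

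The main obstacle I anticipate is not any single computation but the bookkeeping forced by the weak hypotheses on ${\cal P}$: since scalar halving is unavailable inside ${\cal P}$, each membership must be routed through an operator already known to belong to ${\cal P}$ that dominates the operator in question---namely $T_1+T_2$ in the forward direction of part ii) and $|T|$ in the backward direction. Verifying the order inequalities $T^{+}\le T_1$, $T^{-}\le T_2$ and $0\le T^{\pm}\le|T|$ that make the domination property applicable is therefore the crux, and it rests only on the standard lattice facts about $T^{\pm}$ and $|T|$ in the space of regular operators, whose existence is guaranteed by hypothesis.
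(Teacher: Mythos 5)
Your proposal is correct and follows essentially the same route as the paper's proof: the same decomposition $T=U-(U-T)$ in part i) and the same use of $0\le T^{\pm}\le|T|$ together with the domination property in part ii). The only cosmetic difference is in the forward direction of ii), where you apply domination once to $0\le|T|\le T_1+T_2$ while the paper applies it separately to $T^{+}\le T_1$ and $T^{-}\le T_2$ and then sums; both are equally valid.
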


\begin{proof}
i) Let $T=T_1-T_2$, where  $T_1,T_2\in {\cal P}$ are positive.
${\cal P}\pm{\cal P}\subseteq{\cal P}$ implies $T\in {\cal P}$ and
$U=T_1+T_2\in {\cal P}$. From $\pm T\le U$ obtain that $T$ is ${\cal P}$-dominated. \smallskip \\
\medskip
Now, let $T\in{\cal P}$ be a ${\cal P}$-dominated. 
Take $U\in {\cal P}$ such that $\pm T\le U$.
Since $T=U-(U-T)$, and both $U$ and $U-T$ are 
positive ${\cal P}$-operators, $T$ is an \text{\rm r}-${\cal P}$-operator.\smallskip \\
ii) First assume $|T|\in {\cal P}$. 
Since $T=T_+-T_-$, $0\le T_{\pm}\le|T|\in {\cal P}$, the domination property 
implies that $T_+$ and $T_-$ are positive ${\cal P}$-operators, and hence $T=T_+-T_-$ 
is an \text{\rm r}-${\cal P}$-operator. \smallskip\\
Now, assume $T$ is an \text{\rm r}-${\cal P}$-operator. 
Then there are positive $T_1,T_2\in{\cal P}$ 
satisfying $T=T_1-T_2$. Since $0\le T_+\le T_1$ and 
$0\le T_-\le T_2$, the domination property implies $T_+,T_-\in{\cal P}$. 
Hence $|T|=T_++T_-$ is likewise a ${\cal P}$-operator.
\end{proof}

\begin{proposition}\label{vect lat}
Let $F$ be Dedekind complete, and ${\cal P}$ 
a subspace in $\text{\rm L}(E,F)$,
satisfying the domination property. 
Then $\text{\rm r-}{\cal P}(E,F)$
is an order ideal in the Dedekind complete vector lattice
$\text{\rm L}_r(E,F)$.
\end{proposition}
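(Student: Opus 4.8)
The plan is to reduce the whole statement to the modulus characterization already established in Proposition~\ref{prop elem}(ii). Since $F$ is Dedekind complete, the Riesz--Kantorovich theorem guarantees that $\text{\rm L}_r(E,F)$ is a Dedekind complete vector lattice; in particular the modulus $|T|$ exists in $\text{\rm L}_r(E,F)$ for every regular operator $T$. Because ${\cal P}$ is a subspace satisfying the domination property, the hypotheses of Proposition~\ref{prop elem}(ii) are met for each $T\in\text{\rm L}_r(E,F)$, yielding the workhorse equivalence
$$
   T\in\text{\rm r-}{\cal P}(E,F) \iff |T|\in{\cal P}.
$$
I would state this at the outset and invoke it in both halves of the argument. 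Recall that an order ideal is by definition a solid linear subspace, so two things must be checked: that $\text{\rm r-}{\cal P}(E,F)$ is a linear subspace, and that it is solid.

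For the subspace part I would argue directly from the definition of an r-${\cal P}$-operator, without appealing to the lattice structure. If $T=T_1-T_2$ and $S=S_1-S_2$ with $T_i,S_i$ positive ${\cal P}$-operators, then $T+S=(T_1+S_1)-(T_2+S_2)$ exhibits $T+S$ as a difference of positive ${\cal P}$-operators, since sums of positive operators are positive and ${\cal P}$ is closed under addition; scalar multiples are handled by distributing the scalar and, for $\lambda<0$, interchanging the two positive parts. This step is routine and uses only that ${\cal P}$ is a subspace; it also records the inclusion $\text{\rm r-}{\cal P}(E,F)\subseteq\text{\rm L}_r(E,F)$, as differences of positive operators are regular.

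The solidity is the substantive point, and it is precisely where the equivalence above and the domination property cooperate. Suppose $T\in\text{\rm r-}{\cal P}(E,F)$ and $S\in\text{\rm L}_r(E,F)$ satisfy $|S|\le|T|$. By the equivalence, $|T|\in{\cal P}$, so $0\le|S|\le|T|\in{\cal P}$, and the domination property forces $|S|\in{\cal P}$; applying the equivalence once more gives $S\in\text{\rm r-}{\cal P}(E,F)$. This verifies that $\text{\rm r-}{\cal P}(E,F)$ is a solid subspace, i.e. an order ideal in $\text{\rm L}_r(E,F)$.

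I expect the obstacle here to be conceptual rather than computational: the key is to recognize that Proposition~\ref{prop elem}(ii) collapses solidity to a single application of the hypothesis $|S|\le|T|$ together with the domination property, and that Dedekind completeness of $F$ enters exactly to ensure that every regular operator possesses a modulus, so that this characterization is available for all $T,S\in\text{\rm L}_r(E,F)$ throughout the proof.
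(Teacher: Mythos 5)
Your proof is correct and follows essentially the same route as the paper: both rest on Proposition~\ref{prop elem}(ii) to pass between $T\in\text{\rm r-}{\cal P}(E,F)$ and $|T|\in{\cal P}$, and both use the domination property of ${\cal P}$ to get solidity, with Dedekind completeness of $F$ supplying the moduli. You simply spell out the linearity and solidity checks that the paper leaves implicit.
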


\begin{proof}
Since $F$ is Dedekind complete, $\text{\rm L}_r(E,F)$
is a Dedekind complete vector lattice.
By Proposition~\ref{prop elem}~ii), 
$T\in\text{\rm r-}{\cal P}(E,F)\Longrightarrow |T|\in\text{\rm r-}{\cal P}(E,F)$,
and hence $\text{\rm r-}{\cal P}(E,F)$ is a vector sublattice of $\text{\rm L}_r(E,F)$.
Since ${\cal P}$ satisfies the domination property, $\text{\rm r-}{\cal P}(E,F)$
is an order ideal in $\text{\rm L}_r(E,F)$.
\end{proof}
\noindent
The next lemma follows from Proposition \ref{prop elem} ii)
due to Proposition \ref{l-LW-domination}.

\begin{lemma}\label{prop elem l-LW}
Let a linear operator $T:E\to F$ possess the modulus. Then
$T\in\text{\rm r-l-Lwc}(E,F)$ iff $|T|\in\text{\rm r-l-Lwc}(E,F)$.
\end{lemma}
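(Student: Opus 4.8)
The plan is to derive Lemma~\ref{prop elem l-LW} directly as a special case of Proposition~\ref{prop elem}~ii), by verifying that its hypotheses are met when $\mathcal{P}=\text{\rm l-Lwc}(E,F)$. Proposition~\ref{prop elem}~ii) states that, provided the modulus $|T|$ exists in $\text{\rm L}(E,F)$ and the $\mathcal{P}$-operators satisfy the domination property, then $T$ is an $\text{\rm r-}\mathcal{P}$-operator iff $|T|\in\mathcal{P}$. So the entire task reduces to checking that the class $\text{\rm l-Lwc}(E,F)$ fits the abstract framework.

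First I would confirm the two standing assumptions built into Proposition~\ref{prop elem}. The hypothesis $\mathcal{P}\pm\mathcal{P}\subseteq\mathcal{P}\neq\emptyset$ holds because $\text{\rm l-Lwc}(E,F)$ is a (nonempty) vector space, as already noted right after Definition~\ref{Main LWC operators}; in particular it is closed under sums and differences. The hypothesis that $\mathcal{P}$-operators satisfy the domination property is exactly the content of Proposition~\ref{l-LW-domination}: if $0\le S\le T\in\text{\rm l-Lwc}(E,F)$ then $S\in\text{\rm l-Lwc}(E,F)$. This is the key input and is precisely why the remark preceding the lemma says it follows ``due to Proposition~\ref{l-LW-domination}''.

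With both hypotheses verified, I would simply invoke Proposition~\ref{prop elem}~ii) with $\mathcal{P}=\text{\rm l-Lwc}(E,F)$. The assumption of the lemma that $T$ possesses a modulus supplies the remaining hypothesis of that proposition (existence of $|T|$ in $\text{\rm L}(E,F)$), so the equivalence
$$
  T\in\text{\rm r-l-Lwc}(E,F)\iff |T|\in\text{\rm l-Lwc}(E,F)
$$
follows at once. To match the stated conclusion $|T|\in\text{\rm r-l-Lwc}(E,F)$, observe that $|T|$ is positive, so $|T|\in\text{\rm l-Lwc}(E,F)$ is equivalent to $|T|\in\text{\rm r-l-Lwc}(E,F)$: a positive operator lying in $\mathcal{P}$ is trivially an $\text{\rm r-}\mathcal{P}$-operator (write $|T|=|T|-0$), and conversely an $\text{\rm r-}\mathcal{P}$-operator that happens to be positive has its modulus equal to itself, which is in $\mathcal{P}$ by Proposition~\ref{prop elem}~ii).

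I do not anticipate a genuine obstacle here, since the lemma is essentially a packaging of Proposition~\ref{prop elem}~ii) specialized to a concrete $\mathcal{P}$. The only point requiring a moment's care is the bookkeeping in the last paragraph, namely the identification of $|T|\in\text{\rm l-Lwc}(E,F)$ with $|T|\in\text{\rm r-l-Lwc}(E,F)$ for the positive operator $|T|$; this is where one must be attentive not to conflate membership in $\mathcal{P}$ with membership in $\text{\rm r-}\mathcal{P}$ for general operators, even though the two coincide for positive ones.
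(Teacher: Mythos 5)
Your proposal is correct and follows exactly the route the paper intends: the paper gives no separate proof of the lemma, stating only that it follows from Proposition~\ref{prop elem}~ii) combined with the domination property of Proposition~\ref{l-LW-domination}, which is precisely what you verify and invoke. Your extra care in reconciling $|T|\in\text{\rm l-Lwc}(E,F)$ with the stated conclusion $|T|\in\text{\rm r-l-Lwc}(E,F)$ for the positive operator $|T|$ is a worthwhile detail the paper leaves implicit.
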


\begin{theorem}
The following statements hold. \\
i) $\text{\rm r-l-Lwc}(E)$ is a subalgebra of $\text{\rm L}_r(E)$. Moreover, 
$$
   \text{\rm r-l-Lwc}(E)=\text{\rm L}_r(E) \ \Longleftrightarrow I_E\in\text{\rm l-Lwc}(E).
$$ 

ii) If $E$ is Dedekind complete then 
$\text{\rm r-l-Lwc}(E)$ is a closed order ideal of the Banach lattice $(\text{\rm L}_r(E), \ \|\cdot\|_r)$, 
where 
$\ \|x\|_r:=\|~|x|~\|$ is the regular norm on $\text{\rm L}_r(E)$.
\end{theorem}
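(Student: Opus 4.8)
The plan is to derive everything from the structural facts already established: that $\text{\rm l-Lwc}(E)$ is a vector space and a closed right ideal in $\text{\rm L}(E)$ (Corollary~\ref{l-LW-algebra}) satisfying the domination property (Proposition~\ref{l-LW-domination}). For i), since $\text{\rm l-Lwc}(E)$ is a linear subspace with $\text{\rm l-Lwc}(E)\pm\text{\rm l-Lwc}(E)\subseteq\text{\rm l-Lwc}(E)$, the set $\text{\rm r-l-Lwc}(E)$ of differences of positive $\text{\rm l-Lwc}$-operators is a linear subspace of $\text{\rm L}_r(E)$. To see it is closed under composition, I would write $S=S_1-S_2$ and $T=T_1-T_2$ with $S_i,T_j$ positive $\text{\rm l-Lwc}$-operators and expand $ST$ into the four products $S_iT_j$; each is positive as a composition of positive operators and lies in $\text{\rm l-Lwc}(E)$ by the right ideal property, so regrouping $ST=(S_1T_1+S_2T_2)-(S_1T_2+S_2T_1)$ exhibits $ST$ as a difference of positive $\text{\rm l-Lwc}$-operators. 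For the displayed equivalence: if $I_E\in\text{\rm l-Lwc}(E)$ then $S=I_ES\in\text{\rm l-Lwc}(E)$ for every $S\in\text{\rm L}(E)$ by the right ideal property, whence $\text{\rm l-Lwc}(E)=\text{\rm L}(E)$ and $\text{\rm r-l-Lwc}(E)=\text{\rm L}_r(E)$; conversely $\text{\rm r-l-Lwc}(E)=\text{\rm L}_r(E)$ gives $I_E\in\text{\rm r-l-Lwc}(E)$, so writing $I_E=T_1-T_2$ with positive $T_1,T_2\in\text{\rm l-Lwc}(E)$ yields $0\le I_E\le T_1$ and hence $I_E\in\text{\rm l-Lwc}(E)$ by domination.

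For ii), assume $E$ Dedekind complete, so that $\text{\rm L}_r(E)$ is a Dedekind complete vector lattice and $(\text{\rm L}_r(E),\|\cdot\|_r)$ is a Banach lattice; in particular every $T\in\text{\rm L}_r(E)$ possesses a modulus. The order ideal claim is then immediate from Proposition~\ref{vect lat} applied to the subspace ${\cal P}=\text{\rm l-Lwc}(E)$, which satisfies the domination property.

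The remaining, and main, point is norm-closedness with respect to $\|\cdot\|_r$, and the plan is to reduce it to the operator-norm closedness of $\text{\rm l-Lwc}(E)$ (Proposition~\ref{l-LW-closed}) by passing to moduli. By Proposition~\ref{prop elem}~ii) one has $T\in\text{\rm r-l-Lwc}(E)$ iff $|T|\in\text{\rm l-Lwc}(E)$. Thus, given $\text{\rm r-l-Lwc}(E)\ni T_n\to T$ in $\|\cdot\|_r$, each $|T_n|$ is $\text{\rm l-Lwc}$; the contractivity of the modulus in the lattice norm gives $\||T_n|-|T|\|_r\le\|T_n-T\|_r\to 0$, and since the operator norm is dominated by the regular norm this forces $\||T_n|-|T|\|\to 0$ in $\text{\rm L}(E)$. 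Proposition~\ref{l-LW-closed} then delivers $|T|\in\text{\rm l-Lwc}(E)$, and a final application of Proposition~\ref{prop elem}~ii) gives $T\in\text{\rm r-l-Lwc}(E)$. I expect this last step to be the crux: the difficulty is that $\|\cdot\|_r$-convergence says nothing directly about $\text{\rm l-Lwc}$-membership, and the device that resolves it is exactly the reduction to moduli, which converts a regular-norm limit into an operator-norm limit of $\text{\rm l-Lwc}$-operators where the already-established closedness applies.
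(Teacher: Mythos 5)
Your proposal is correct and follows the same skeleton as the paper's argument: part i) rests on the right‑ideal property from Corollary~\ref{l-LW-algebra} (your explicit four‑term expansion of $ST$ is exactly what that corollary encodes) together with the domination property of Proposition~\ref{l-LW-domination} for the converse of the displayed equivalence, and part ii) obtains the order‑ideal claim from Proposition~\ref{vect lat} applied to ${\cal P}=\text{\rm l-Lwc}(E)$. The one point where you genuinely diverge is the $\|\cdot\|_r$‑closedness: the paper delegates this step to an external lemma of \cite{Emel} combined with Proposition~\ref{l-LW-closed}, whereas you prove it directly by passing to moduli --- using $\bigl||T_n|-|T|\bigr|\le|T_n-T|$ together with monotonicity of the norm to get $\||T_n|-|T|\|_r\le\|T_n-T\|_r$, then $\|S\|\le\|S\|_r$ to convert regular‑norm convergence of $(T_n)$ into operator‑norm convergence of $(|T_n|)$, and finally the operator‑norm closedness of $\text{\rm l-Lwc}(E)$ plus Proposition~\ref{prop elem}~ii) to conclude $T\in\text{\rm r-l-Lwc}(E)$. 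This self‑contained reduction is valid and is essentially the content of the cited lemma, so your write‑up buys independence from the external reference at the cost of a few extra lines; everything else matches the paper's route.
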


\begin{proof}
i)\ \
It follows from Corollary \ref{l-LW-algebra}
that $\text{\rm r-l-Lwc}(E)$ is a right ideal and hence is 
a subalgebra of $\text{\rm L}_r(E)$. 
The condition on $I_E$ under that $\text{\rm r-l-Lwc}(E)=\text{\rm L}_r(E)$ is trivial.

ii)\ \ 
It follows from Lemma \ref{prop elem l-LW} that $\text{\rm r-l-LW}(E)$ is 
a Riesz subalgebra of $\text{\rm L}_r(E)$. Proposition \ref{vect lat} implies 
that $\text{\rm r-l-Lwc}(E)$ is an order ideal of $\text{\rm L}_r(E)$.
Let $S,T\in\text{\rm r-l-LW}(E)$ satisfy $|S|\le|T|$.
In order to show $\|S\|_r\le\|T\|_r$,
observe that $|S|,|T|\in\text{\rm l-LW}(E)$ by Lemma \ref{prop elem l-LW}. Then 
$
   \|S\|_r=\|~|S|~\|\le\|~|T|~\|=\|T\|_r.
$
Due to \cite[Lm.1]{Emel}, Lemma \ref{l-LW-closed} implies 
that $(\text{\rm r-l-Lwc}(E), \ \|\cdot\|_r)$ is a Banach space.
Clearly, the regular norm $\|\cdot\|_r$ is submultiplicative. Indeed,
let $S,T\in{\text{\rm L}}(E)$. Then $|ST|\le|S|\cdot|T|$, and hence 
$
   \|ST\|_r\le\|~|S||T|~\|\le\|~|S|~\|\cdot\|~|T|~\|=
   \|S\|_r\cdot\|T\|_r,
$
as desired.
\end{proof}

{\tiny 
%%%%%%%%%%%%%%%%%%%%
}
%\newpage

\end{document}